\newtheorem{theorem}{Theorem}
\theoremstyle{plain}
\newtheorem{corollary}{Corollary}
\newtheorem{definition}{Definition}
\newtheorem{lemma}{Lemma}
\theoremstyle{remark}
\newtheorem{remark}{Remark}
\newcommand{\Po}{{\mathcal P}}
\newcommand{\Li}{{\mathcal L}}
\newcommand{\Cl}{{\mathcal C}}
\begin{document}
\title[Cycles in Projective Spaces]{Cycles in Projective Spaces}
\author{Elaina Aceves}
\address{Department of Mathematics\\
California State University, Fresno \\ Fresno, CA 93740.}
\email{ekaceves@mail.fresnostate.edu}
\author{David Heywood}
\address{Department of Mathematics\\
California State University, Fresno \\ Fresno, CA 93740.}
\email{davaudoo@mail.fresnostate.edu}
\author{Ashley Klahr}
\address{Department of Mathematics and Computer Science \\
University of San Diego \\
San Diego, CA 92110} 
\email{aklahr@sandiego.edu}
\author{Oscar Vega}
\address{Department of Mathematics\\
California State University, Fresno \\ Fresno, CA 93740.}
\email{ovega@csufresno.edu}
\date{}

\subjclass[2000]{Primary 05, 51; Secondary 20} \keywords{Graph embeddings, finite projective space, cycle.}

\thanks{Part of this work was supported by NSF Grant \#DMS-1156273 (Fresno State's Summer REU)} 

\begin{abstract}
We prove that every possible $k$-cycle can be embedded into $PG(n,q)$, for all $n\geq 3$ and $q$ a power of a prime. 
\end{abstract}

\maketitle

\section{Introduction}

In recent years, several articles have been written on the study of embeddings of cycles, and other families of graphs, into finite affine and projective planes.  For example, Lazebnik, Mellinger, and the fourth author proved in \cite{LMV13} that finite planes (both affine and projective) admit embeddings of cycles of all possible lengths. They also found, in \cite{LMV09}, close expressions for the number of $k$-cycles in any projective plane, for $k=3, 4, 5, 6$. This work was later expanded by Voropaev in \cite{V13} to $k= 7, 8, 9, 10$. In addition, the work of the fourth author with Peabody and White in \cite{PVW13}  shows interesting connections between these embeddings and the structure of the finite field coordinatizing the `host' plane.

Not much is known (really, nothing at all for $n>2$)  about embeddings of cycles into $PG(n,q)$. For $n=2$, besides the articles mentioned in the previous paragraph, Singer's work \cite{Singer38} implies the existence of cycles of maximal length in $PG(2,q)$, and Schmeichel \cite{Schm89} proved pancyclicity of $PG(2,q)$ for when $q$ is prime. We refer the reader to \cite{LMV13} for a thorough introduction to embeddings of graphs into finite structures. 

\medskip

We start with a few definitions, conventions, and well-known facts. Any concept we miss to define may be found in \cite{B98}, \cite{Demb}, or \cite{H}.

A graph $G$ is a finite collection of vertices together with edges connecting pairs of vertices. We denote the set of vertices of $G$ by $V(G)$, and the set of edges of $G$ by $E(G)$. A $k$-cycle $C_k$ is a graph on $k$ vertices and $k$ edges such that every vertex has degree two.  We will say that $k$ is the length of $C_k$. 

Throughout this article, $q$ will always be the power of a prime, and $GF(q)$ will denote the field of order $q$. 

\begin{definition}
For $n\geq 3$, we let $V$ to be an $(n+1)$-dimensional vector space over $GF(q)$. The \emph{$n$-dimensional finite projective space}, denoted $PG(n,q)$, is the geometry constructed from $V$ as follows:
\begin{itemize}
\item its points are the $1$-dimensional subspaces of $V$,
\item its lines are the $2$-dimensional subspaces of $V$,
\item in general, its $k$-dimensional projective subspaces are the $(k+1)$-dimensional subspaces of $V$, for all $k=1,2,\cdots , n$.
\item Incidence is given by standard set-theoretical inclusion. 
\end{itemize}
\end{definition}

The construction above is also valid for $n=2$. However, in this case we cannot say that $PG(2,q)$ is \emph{the} projective plane, as there may be several non-isomorphic projective planes constructed on the same set of points (this is not the case for dimension $3$ and above).  From now on we assume $n\geq 3$.\\

\begin{remark}\label{rmkcountingpgnq}
Just like projective planes, $PG(n,q)$ is very symmetric. The  following results are not too hard to prove (and may be found in \cite{H}). 
\begin{itemize}
\item[\textbf{(a)}] $PG(n,q)$ contains 
\[
\frac{q^{n+1}-1}{q-1} \ \text{points, \hspace{.5in}and \hspace{.5in}} \frac{(q^{n+1}-1)(q^{n}-1)}{(q^2-1)(q-1)}\  \text{lines}
\]
Note that there are more lines than points.
\item[\textbf{(b)}] Every line of $PG(n,q)$ contains exactly $q+1$ points.
\item[\textbf{(c)}] Every plane of $PG(n,q)$ is isomorphic to $PG(2,q)$.
\end{itemize}
\end{remark}

\begin{definition}
A collineation of $PG(n,q)$ is a permutation of the points of $PG(n,q)$ that preserves incidence. The group of all collineations of $PG(n,q)$ is denoted $Aut(PG(n,q))$.
\end{definition}

\medskip

Now that we have seen the main two objects we will study, we define the way they will interact with each other.

\begin{definition}
Let $\Po$ denote the set of points in $PG(n,q)$ and $\Li$ denote the set of lines in $PG(n,q)$. We will say that a graph $G=(V,E)$ is embedded (or embeds) into $PG(n,q)$ if there is an injective map $\varphi:V \rightarrow \mathcal{P}$, that, by preserving incidence, induces a injective map $\overline{\varphi}:E \rightarrow \mathcal{L}$. \\
In the case such a function exists we will call it an embedding of $G$ into $PG(n,q)$, and write $G \hookrightarrow PG(n,q)$, or just $\varphi: G \hookrightarrow PG(n,q)$ when the context allows it.
\end{definition}

\begin{remark}
If $\phi : G\hookrightarrow PG(n,q)$, we will identify $G$ with $\phi(G)$, the vertex $v$ with the point $\phi(v)$, and the edge $e$ connecting vertices $v$ and $w$ with the \emph{whole} line $\phi(v)\phi(w)$. Hence, when the context allows it, we will call points in $PG(n,q)$ vertices, and lines in $PG(n,q)$ edges.
\end{remark}

We can now start studying embeddings of cycles into $PG(n,q)$. Our ultimate goal, in this article, is to prove the following result.

\begin{theorem} \label{thmMAIN1}
$PG(n,q)$ is pancyclic. That is, $k$-cycles embed in $PG(n,q)$, for all $3\leq k \leq q^{n}+q^{n-1}+\cdots + q+1$. 
\end{theorem}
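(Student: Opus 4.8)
The plan is to induct on the dimension $n$. For the base case $n=2$, the pancyclicity of the projective plane is exactly the result established in \cite{LMV13}, so I may assume that every $C_k$ with $3 \le k \le \frac{q^{3}-1}{q-1}$ embeds in $PG(2,q)$. For the inductive step I fix a hyperplane $H \cong PG(n-1,q)$ inside $PG(n,q)$ and write $A = PG(n,q)\setminus H$ for its affine complement. By Remark~\ref{rmkcountingpgnq}(a), $H$ carries $\frac{q^{n}-1}{q-1}$ points while $A$ carries the remaining $q^{n}$, summing to the $\frac{q^{n+1}-1}{q-1}$ points of $PG(n,q)$; the two regimes $3 \le k \le \frac{q^{n}-1}{q-1}$ and $\frac{q^{n}-1}{q-1} < k \le \frac{q^{n+1}-1}{q-1}$ will be handled separately.

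The short cycles come for free: since every line of $H$ is also a line of $PG(n,q)$, any embedding of $C_k$ into $H$ is automatically an embedding into $PG(n,q)$. Thus the inductive hypothesis immediately supplies all cycles of length $3 \le k \le \frac{q^{n}-1}{q-1}$, and in particular a Hamiltonian cycle $P_0 P_1 \cdots P_{M-1} P_0$ of $H$, where $M = \frac{q^{n}-1}{q-1}$. It remains only to produce the longer cycles, whose lengths exceed the number of points available in $H$ and which must therefore recruit points from the affine part $A$.

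The mechanism I would use is edge rerouting: to lengthen a cycle I replace a single edge $P_iP_{i+1}$ by a detour $P_i Q_1 Q_2 \cdots Q_m P_{i+1}$ through $m$ previously unused points $Q_1,\dots,Q_m \in A$, which raises the length by exactly $m$. Because $A$ contains exactly $q^{n}$ points and $\frac{q^{n}-1}{q-1}+q^{n} = \frac{q^{n+1}-1}{q-1}$, being able to realize every detour length $1 \le m \le q^{n}$ is precisely what is needed to sweep out all the remaining cycle lengths; at the extreme $m=q^{n}$ the construction consumes every affine point and yields a Hamiltonian cycle of $PG(n,q)$. The detour is built one vertex at a time: having placed $Q_1,\dots,Q_j$, I choose $Q_{j+1}$ to be an unused affine point for which the new edge $Q_jQ_{j+1}$ is a line distinct from every edge used so far. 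A line $Q_jQ_{j+1}$ can only coincide with an existing edge that also passes through $Q_j$, and by Remark~\ref{rmkcountingpgnq}(b) each such line contains only $q+1$ points, so a greedy counting argument shows that a legal choice of $Q_{j+1}$ persists as long as unused affine points remain. The same counting governs the two splice edges $P_iQ_1$ and $Q_mP_{i+1}$, which must avoid the few edges through $P_i$ and $P_{i+1}$.

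The hard part is the tail of this process, when almost all affine points have already been used, which is exactly the near-Hamiltonian regime where the longest cycles live. There the supply of fresh points no longer dominates the number of forbidden lines, so a naive greedy step can stall, and I expect to need a more careful, possibly recursive, construction of the affine detour: slicing $A$ into $q$ parallel copies of $AG(n-1,q)$ and threading a path through them in a prescribed order, so that the two endpoints remain appropriately positioned relative to $P_i$ and $P_{i+1}$ and every intermediate length is attained exactly. Controlling these line collisions, that is, guaranteeing the simultaneous distinctness of all $k$ edges at the top of the length range, and doing so uniformly, including the small fields $q=2,3$ where the counting margins are thinnest, is the principal obstacle, and is where I expect the bulk of the technical work to go.
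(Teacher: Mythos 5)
Your outer skeleton is sound and in fact mirrors the paper's final step: induct on $n$, get short cycles for free inside a hyperplane $H$, and obtain the long lengths by attaching to (a cycle in) $H$ an affine detour of every possible size $1\le m\le q^n$. But the proposal never proves the detours exist, and this is not a technicality --- it is the entire content of the theorem. The specific greedy claim, that ``a legal choice of $Q_{j+1}$ persists as long as unused affine points remain,'' is unjustified and fails on worst-case counting: edges are whole lines of $q+1$ points, so a previously used line can pass through the current endpoint $Q_j$ even though $Q_j$ was never a vertex of it; hence the forbidden set at step $j$ is not of size $O(q)$ but can grow like $j\cdot q$, which overtakes the supply of fresh affine points when $m$ is roughly $q^{n-1}$, far short of the needed $m=q^n$. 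Worse, at the extreme $m=q^n$ (the Hamiltonian cycle) any greedy/counting argument has zero slack by definition: every remaining point must be used, so there is no margin left with which to avoid line collisions. You acknowledge this yourself (``I expect to need a more careful, possibly recursive, construction''), but the proposal stops there, so the only case not already handled by the induction hypothesis --- the long cycles --- is left unproved.

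The paper fills exactly this hole with Lemma \ref{lemshortedcycles}: for every $3\le k\le q^{n}+2$ there is a $k$-cycle in $PG(n,q)$ meeting a prescribed hyperplane in \emph{exactly} two vertices and \emph{exactly} one edge. That lemma is itself proved recursively via the pencil decomposition of Remark \ref{rem5} --- the $q+1$ hyperplanes through a common $PG(n-2,q)$, which is precisely the projective completion of your ``slice $A$ into parallel copies of $AG(n-1,q)$'' idea --- by gluing one path per member of the pencil. The crucial difference is how collisions are handled: not by counting, but by group actions. Lemma \ref{lemPGn-1inPGn}\textbf{(b)} lets one apply a collineation fixing $\Sigma$ pointwise to ``move'' an offending endpoint within an orbit of size at least $q-1$, while at most $\alpha-2\le q-2$ positions are ever forbidden, so a valid position always exists --- even for small $q$. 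With that lemma in hand, the main proof is your outer step made rigorous: take the $(q^{d+1}+2)$-cycle from the lemma (it uses all affine points and exactly one edge $\ell$ inside the hyperplane $\Pi_{q+1}$), take a $\beta$-cycle inside $\Pi_{q+1}$ through $\ell$ by induction, and delete $\ell$ from both to splice them. If you want to rescue your approach, the statement you must prove is exactly an analogue of Lemma \ref{lemshortedcycles}, and the collineation ``moving'' device is the tool that replaces the doomed greedy count.
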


Our approach to embedding cycles into projective spaces is of a recursive nature.  It is based on the results in \cite{LMV13} and the few results in the following short section.

\section{Cycles in $PG(2,q)$}

Let $\pi$ be a projective plane in $PG(3,q)$,  isomorphic to $PG(2,q)$, with  `origin' $O$ and line at infinity $\ell_{\infty}$. Following the notation in \cite{LMV13}, we will denote the lines through $O$ by $l_i$, for $1\leq i \leq q+1$, and the points $l_i \cap \ell_{\infty}$ by $(i)$.

\begin{remark}\label{rem1}
The proof of Theorem 1 in \cite{LMV13} not only shows that a $k$-cycle $C_k$ may be embedded in an affine plane of order $q$, for all $3\leq k \leq q^2$. In fact, a close analysis of this proof also yields that $C_k$ may be embedded in one of the following two ways:
\begin{itemize}
\item[\textbf{(a)}] $O$ is a vertex of $C_k$, and there is at least one line through $O$ that is not an edge of $C_k$.
\item[\textbf{(b)}] $O$ is not a vertex of $C_k$, $k=t(q+1)$, for some $1\leq t \leq q-1$, and no line through $O$ is an edge of $C_k$.
\end{itemize}
\end{remark}

Next is the result we really need for later on.

\begin{corollary}\label{cor1}
For every $3\leq k \leq q^2+2$, a path on $k$ vertices can be embedded into $\pi$ in such a way that its endpoints lay on $\ell_{\infty}$ and no other point on $\ell_{\infty}$ is a vertex of this path.
\end{corollary}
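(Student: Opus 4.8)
The plan is to realize the desired path by surgery on a cycle supplied by Remark~\ref{rem1}. Observe first that a path on $k$ vertices whose two endpoints lie on $\ell_\infty$ and whose remaining $k-2$ vertices are affine is exactly what one gets from an affine $(k-1)$-cycle through $O$ by the following \emph{origin-splitting} move. Suppose $C_{k-1}$ is embedded in the affine part of $\pi$ with $O$ a vertex; its two incident edges lie on two distinct lines through $O$, say $l_a$ and $l_b$, meeting $\ell_\infty$ in points $(a)$ and $(b)$. Deleting $O$ and re-attaching the two edges $Ou$ and $Ou'$ as the edges $(a)u$ and $(b)u'$ (which are the \emph{same} lines $l_a,l_b$) turns the cycle into a path $(a)-u-\cdots-u'-(b)$ on $k$ vertices with endpoints on $\ell_\infty$.

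The virtue of this move is that essentially nothing has to be checked: since $l_a\neq l_b$ are distinct lines through $O$ they meet $\ell_\infty$ in distinct points, so $(a)\neq(b)$; the interior vertices are the affine vertices of $C_{k-1}$; and the two new edges are literally the two deleted edges, so injectivity of the edge map, and the absence of any further vertex on $\ell_\infty$, are inherited from the cycle.

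To run this for every $4\le k\le q^2+1$ I would embed $C_{k-1}$ via Remark~\ref{rem1}. If the embedding is of type (a) then $O$ is already a vertex and I split it. The only other possibility is type (b), which occurs solely when $k-1=t(q+1)$; there $C_{k-1}$ lies in the affine plane and avoids $O$, so I first apply a translation (an affine collineation, which fixes $\ell_\infty$ and preserves the affine plane) carrying one of its vertices to $O$. The image is again an affine $(k-1)$-cycle, now through $O$, and origin-splitting applies. The case $k=3$ I would do by hand: take any affine point $w$ and two of the $q+1\ge 3$ lines through it, yielding the path $(a)-w-(b)$.

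The remaining case $k=q^2+2$ is the genuinely delicate one. Here the $k-2=q^2$ interior vertices must be \emph{all} affine points, so $O$ cannot be deleted and origin-splitting is unavailable. Instead I would start from a Hamiltonian cycle $C_{q^2}$ on the affine points (necessarily type (a), since $q^2\equiv 1\pmod{q+1}$), delete one edge to get a Hamiltonian path, and extend its two affine endpoints out to $\ell_\infty$. The hard part is exactly this extension: I must show each endpoint admits a \emph{free} direction, i.e. a line through it carrying none of the path's edges, and that the two chosen directions give distinct points of $\ell_\infty$. The leverage available is a counting estimate — the affine plane has $q^2+q$ lines while $C_{q^2}$ uses only $q^2$ of them as edges, so at least $q\ge 2$ affine lines are edge-free — combined with the freedom to choose which edge of $C_{q^2}$ to delete and, if needed, to normalize by an affine collineation. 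Turning this abundance of edge-free lines into two such lines anchored at the two endpoints of a single Hamiltonian path and pointing in different directions is the step I expect to require the most care.
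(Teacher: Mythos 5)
Your origin-splitting move is sound, and for $3\leq k\leq q^2+1$ it gives a complete argument that is a genuine (and arguably cleaner) variant of the paper's: you consume a $(k-1)$-cycle and reuse its two lines through $O$ as the two terminal edges, so no extra free line through $O$ is ever needed, whereas the paper starts from a $(k-2)$-cycle, reattaches the deleted edge $l_i$ to the point $(i)$ at infinity, and must invoke the spare line $l_t$ guaranteed by Remark \ref{rem1}\textbf{(a)}. Your treatment of type \textbf{(b)} by translating a vertex onto $O$ is also fine (the paper instead deletes a middle vertex of three consecutive ones and routes the path through $O$).

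The gap is exactly where you flagged it: $k=q^2+2$, and your proposed counting leverage cannot close it. Knowing that $q+1$ of the $q^2+q$ affine lines are unused by the Hamiltonian path says nothing about lines \emph{through its endpoints}: a priori, every line through an endpoint, other than the deleted line itself, could occur among the $q^2-1$ edge-lines of the path. The deleted line is indeed free at both endpoints, but it cannot serve both extensions, since using it twice sends the two terminal edges to the same line and both endpoints to the same point of $\ell_\infty$, violating injectivity. The missing idea is to choose the deleted edge \emph{incident to $O$}. Since $q^2\equiv 1 \pmod{q+1}$, the Hamiltonian cycle $C_{q^2}$ is of type \textbf{(a)}, so $O$ is a vertex and some line $l_t$ through $O$ is not a cycle edge; delete the cycle edge $l_i$ joining $O$ to a neighbour $u$, extend $u$ to $(i)$ along the now-freed line $l_i$, and extend $O$ to $(t)$ along $l_t$. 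The lines $l_i\neq l_t$ pass through $O$, hence meet $\ell_\infty$ in distinct points, and all edge-lines remain distinct. This is precisely the paper's construction (its case where $O$ is a vertex, applied with $k-2=q^2$): reattaching the deleted edge to infinity is what replaces your search for a second free direction, and it is why the paper needs no separate argument at the top value $k=q^2+2$.
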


\begin{proof}
The cases $k=3, 4$ are trivial, so let $5\leq k\leq q^2+2$ and consider a $(k-2)$-cycle $\Cl$ with the characteristics mentioned in Remark \ref{rem1}. \\
If $O$ is a vertex of $\Cl$. Let $l_i$ and $l_j$ be the two edges of $\Cl$ through $O$, and let $l_t$ be a line through $O$ that is not an edge of $\Cl$. Now delete the edge $l_i$ from $\Cl$ to get a path $\mathcal{P}$ on $k-2$ vertices. Then the path
\[
(i) \xrightarrow{l_{i}} \mathcal{P}   \xrightarrow{l_{t}}  (t)     
\]
has length $k$ and the desired properties.\\
If $O$ is not a vertex of $\Cl$ then we consider three `consecutive' vertices in $\Cl$: $P_i, P_t$, and $P_j$, which are on the lines through $O$  labeled by $l_i, l_t$, and $l_j$, respectively. We delete the edges $P_iP_t$ and $P_tP_j$, and the vertex $P_t$,  from $\Cl$ to get a path $\mathcal{P}$ on $k-3$ vertices with endpoints $P_i$ and $P_j$. Then the path
\[
(i) \xrightarrow{l_{i}} \mathcal{P}   \xrightarrow{l_{j}}  O       \xrightarrow{l_{t}}  (t)
\]
has length $k$ and the desired properties.
\end{proof}

We now have enough to address our main problem.

\section{Cycles in $PG(n,q)$}

In order to construct cycles in $PG(n,q)$ we will `glue' paths on certain projective subspaces of $PG(n,q)$, that have been arranged in a `nice' way.  For this strategy to work, we need the following remark.

\begin{remark}[See \cite{H}]\label{rem5}
Given a projective subspace $\Sigma \cong PG(n-1,q)$ of $PG(n+1,q)$, there are exactly $q+1$ $n$-dimensional projective subspaces (all isomorphic to $PG(n,q)$) of $PG(n+1,q)$ containing $\Sigma$. These $PG(n,q)$s partition the points of $PG(n+1,q) \setminus \Sigma$. \\
The collineation group of $PG(n+1,q)$ is transitive on the set of its $n$-dimensional projective subspaces.
\end{remark}

The following technical lemma is `folklore', and easy to prove using elementary linear algebra. We include it here for future reference.

\begin{lemma}\label{lemPGn-1inPGn}
Let $\Pi = PG(n,q)$ and let 
\[
\Sigma = \{ <(x_1, \ldots , x_{n+1})> \ \in \Pi ; \ x_{n+1}=0   \}
\] 
which is isomorphic to  $PG(n-1,q)$. Let 
\[
Aut(\Sigma) =\{ \tau \in Aut(\Pi); \ \tau(\Sigma) = \Sigma \}
\]
and
\[
 Aut_{\Sigma} =\{ \tau \in Aut(\Pi); \ \tau(P) = P, \ \text{for all} \ P\in \Sigma \}
 \]
Then, 
\begin{itemize}
\item[\textbf{(a)}]  $Aut(\Sigma)$ acts transitively on the set of lines of $\Sigma$. Moreover, the stabilizer of a line acts doubly-transitively on the points of such line. 
\item[\textbf{(b)}]   Both $Aut(\Sigma)$ and $Aut_{\Sigma}$ act on the points of $\Pi \setminus \Sigma$. Moreover, The orbit of $P\in \Pi \setminus \Sigma$ under $Aut_{\Sigma}$ consists of at least $q-1$ elements.
\end{itemize}
\end{lemma}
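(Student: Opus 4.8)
The plan is to realize every required collineation by an \emph{explicit} linear map inside $PGL(n+1,q) \subseteq Aut(\Pi)$, so that no appeal to the Fundamental Theorem of Projective Geometry, and no field automorphisms, are needed: since both parts ask only for transitivity or for an orbit lower bound, it suffices to exhibit enough elements rather than to describe the full groups. In the given coordinates $\Sigma$ corresponds to the subspace $U=\langle e_1,\dots,e_n\rangle$ of $V=GF(q)^{n+1}$, and the points (resp.\ lines) of $\Sigma$ are the $1$- (resp.\ $2$-) dimensional subspaces of $U$. The whole argument then amounts to writing down block matrices that either stabilize $U$ or fix it up to a scalar, and reading off their induced actions.

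For part \textbf{(a)}, each $B\in GL(n,q)$ extends to the block-diagonal map $\mathrm{diag}(B,1)\in GL(n+1,q)$, which stabilizes $\Sigma$ and acts on it exactly by the natural $PGL(n,q)$-action on $PG(n-1,q)$. Transitivity on lines of $\Sigma$ is then the standard fact that $GL(n,q)$ is transitive on $2$-dimensional subspaces of $U$ (send a basis of one $2$-space to a basis of another and extend to a basis of $U$). For the line stabilizer, fix a line $L\leftrightarrow W$ and a decomposition $U=W\oplus W'$; for each $g\in GL(W)\cong GL(2,q)$ the map $\mathrm{diag}(g\oplus \mathrm{id}_{W'},1)$ lies in $Aut(\Sigma)$, stabilizes $L$, and induces $[g]\in PGL(2,q)$ on the $q+1$ points of $L\cong PG(1,q)$. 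Since $PGL(2,q)$ is sharply $3$-transitive on $PG(1,q)$, it is in particular doubly transitive, and the claim follows.

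For part \textbf{(b)}, the first assertion is immediate: any $\tau$ with $\tau(\Sigma)=\Sigma$ permutes the complement $\Pi\setminus\Sigma$, and $Aut_\Sigma\subseteq Aut(\Sigma)$ because fixing $\Sigma$ pointwise forces $\tau(\Sigma)=\Sigma$. For the orbit bound I would use the elations with axis $\Sigma$, represented by the matrices $\left(\begin{smallmatrix} I_n & w \\ 0 & 1\end{smallmatrix}\right)$ with $w\in GF(q)^n$: these are invertible, fix $\Sigma$ pointwise (hence lie in $Aut_\Sigma$), and send $\langle(\mathbf p,1)\rangle\mapsto\langle(\mathbf p+w,1)\rangle$. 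As $w$ ranges over $GF(q)^n$ this is a regular action on the $q^n$ points of $\Pi\setminus\Sigma$, so the orbit of any $P$ is in fact all of $\Pi\setminus\Sigma$, of size $q^n\geq q-1$. This is stronger than claimed; the stated constant $q-1$ is presumably the source coming from the homology subgroup $\mathrm{diag}(1,\dots,1,\mu)$, $\mu\in GF(q)^*$, whose orbit of a point off its center has size exactly $q-1$.

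Because the lemma is deliberately elementary, I do not expect a serious obstacle; the work is bookkeeping. The only points needing care are verifying that the block matrices written down are genuinely invertible and fix the advertised points, invoking sharp $3$-transitivity of $PGL(2,q)$ to upgrade to double transitivity on a line, and, in the construction for the line stabilizer, keeping the complementary subspace $W'$ fixed so that the map stays inside $Aut(\Sigma)$ while realizing an arbitrary element of $PGL(2,q)$ on $L$. Everything else reduces to the standard linear algebra of $GL$ acting on subspaces of fixed dimension.
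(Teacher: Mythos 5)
Your proof is correct. There is, in fact, no proof in the paper to compare it against: the authors state this lemma as ``folklore'' that is ``easy to prove using elementary linear algebra'' and include it only for reference, so your explicit block-matrix argument is precisely the verification they gesture at. Two remarks on your write-up. First, your part \textbf{(b)} establishes strictly more than the statement asks: the elation subgroup $\left\{ \left(\begin{smallmatrix} I_n & w \\ 0 & 1 \end{smallmatrix}\right) : w \in GF(q)^n \right\}$ lies in $Aut_{\Sigma}$ and acts regularly on the $q^n$ points of $\Pi \setminus \Sigma$, so the $Aut_{\Sigma}$-orbit of any point off $\Sigma$ is the whole of $\Pi \setminus \Sigma$, of size $q^n \geq q-1$. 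The weak bound $q-1$ is all that is used later (in the proof of Lemma \ref{lemshortedcycles}, where at most $\alpha - 2 \leq q-2$ positions are forbidden when ``moving'' the points $A_i$), but your stronger conclusion costs nothing and would make that step of the paper even more transparent. Second, your speculation that the constant $q-1$ originates from the homology subgroup $\mathrm{diag}(1,\dots,1,\mu)$, $\mu \in GF(q)^*$, is plausible and correctly hedged: such homologies give orbits of size exactly $q-1$ only for points off the center $\langle e_{n+1} \rangle$, and the center itself is fixed, so homologies alone would not give a uniform bound over all of $\Pi \setminus \Sigma$; your elation argument avoids this issue entirely.
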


\begin{remark}
Lemma \ref{lemPGn-1inPGn} holds for any $\Sigma$, isomorphic to $PG(n-1,q)$, contained in any projective space $\Pi$ that is isomorphic to $PG(n,q)$.
\end{remark}

\begin{lemma}\label{lemshortedcycles}
Let $PG(n-1,q) \cong \Sigma \subseteq PG(n,q)$. For every $3\leq k \leq q^{n}+2$, we can construct a $k$-cycle on $PG(n,q)$ with exactly two points of $\Sigma$ as vertices, and exactly one line of $\Sigma$ as edge. 
\end{lemma}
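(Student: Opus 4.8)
The plan is to induct on $n$, using Corollary \ref{cor1} for the base case and the pencil of hyperplanes from Remark \ref{rem5} to carry out the inductive step by gluing together paths that live in parallel ``slices''. For the base case $n=2$ the subspace $\Sigma\cong PG(1,q)$ is simply a line, which I take to be the line at infinity $\ell_\infty$ of a plane $\pi\cong PG(2,q)$. Applying Corollary \ref{cor1} with this $\ell_\infty$ yields, for each $3\leq k\leq q^2+2$, a path on $k$ vertices whose two endpoints lie on $\ell_\infty$ and which has no other vertex on $\ell_\infty$. Closing this path with the edge joining its two endpoints -- which is exactly the line $\ell_\infty\subseteq\Sigma$ -- produces a $k$-cycle with precisely two points of $\Sigma$ as vertices and precisely one line of $\Sigma$ as an edge, as required.

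For the inductive step I assume the statement in dimension $n-1$. Fix $T\cong PG(n-2,q)$ inside $\Sigma$ and consider the pencil of $q+1$ hyperplanes through $T$ provided by Remark \ref{rem5}: one of them is $\Sigma$ itself, and the remaining $q$ hyperplanes $H_0,\dots,H_{q-1}$ (each isomorphic to $PG(n-1,q)$ and containing $T$) have affine parts $S_c=H_c\setminus T$ that partition the $q^n$ points of $PG(n,q)\setminus\Sigma$ into slices of size $q^{n-1}$. Writing the target length as $k=m+2$ with $1\leq m\leq q^n$, I would distribute $m=\sum_c m_c$ among the slices with each used slice carrying $1\leq m_c\leq q^{n-1}$ of its points. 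In each such slice I apply the inductive hypothesis inside $H_c$, taking $T$ as the distinguished copy of $PG(n-2,q)$, to obtain a cycle of length $m_c+2$ whose two $T$-vertices are adjacent; deleting these two $T$-vertices leaves a path that covers exactly the $m_c$ affine points of $S_c$ and whose two endpoints are affine.

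It then remains to glue. I would chain the slice-paths into one long path by joining, for consecutive slices, an affine endpoint of one to an affine endpoint of the next; since both of these points are affine, the connecting line meets $\Sigma$ in a single point and is \emph{not} contained in $\Sigma$. Finally I would attach two points $A,B\in\Sigma$ to the two remaining free ends and close the cycle with the edge $AB$, which is a line of $\Sigma$. A quick count gives $m+2=k$ edges, and by construction the only vertices lying in $\Sigma$ are $A$ and $B$ and the only edge contained in $\Sigma$ is $AB$; letting $m$ range over $1\leq m\leq q^n$ then covers all $3\leq k\leq q^n+2$.

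The main obstacle is the bookkeeping needed to certify that this is genuinely an embedding: distinct edges must map to distinct lines, and -- since a connecting line pierces $\Sigma$ and may cross several slices -- no edge may pass through a vertex other than its two endpoints. I expect to dispose of this using the symmetry recorded in Lemma \ref{lemPGn-1inPGn}, applied to each configuration $T\subseteq H_c$: the transitivity of $Aut(T)$ on the lines of $H_c$ together with the fact that the $Aut_T$-orbit of a point of $H_c\setminus T$ has size at least $q-1$ gives enough freedom to reposition the endpoints of each slice-path, and analogous freedom in $\Sigma$ lets me choose $A$ and $B$, so that all gluing lines are distinct and avoid the other vertices. Verifying that this freedom outweighs the bounded number of forbidden positions -- essentially a counting argument -- is where the real work lies.
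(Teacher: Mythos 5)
Your proposal is correct, and at its core it is the same argument as the paper's: induction on $n$ with Corollary \ref{cor1} as the base case, decomposition of $PG(n,q)$ by the pencil of $q+1$ hyperplanes through a codimension-two subspace (Remark \ref{rem5}), slice paths cut from inductive-hypothesis cycles, and Lemma \ref{lemPGn-1inPGn}\textbf{(b)} to reposition endpoints so that the gluing lines come out distinct. The differences are organizational, and they mostly work in your favor. The paper runs the pencil through a $PG(d-1,q)$, forces all the slice cycles to share two points $P,Q$ and an edge $\ell$ lying in that subspace (this is what Lemma \ref{lemPGn-1inPGn}\textbf{(a)} is used for), keeps $P$ and $Q$ as endpoints of the first and last paths, closes with $\ell$, and lets an \emph{unused} pencil member play the role of the $\Sigma$ in the statement. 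You instead make the given $\Sigma$ itself a member of the pencil (through $T\subseteq\Sigma$), strip every slice path down to purely affine points, and close with freshly chosen vertices $A,B\in\Sigma\setminus T$ joined by the edge $AB$; this treats all slices uniformly, needs no shared configuration (so part \textbf{(a)} is not really load-bearing for you), and keeps the ``exactly two points, exactly one line of $\Sigma$'' property manifest in every case -- including the analogue of the paper's $\alpha=2$ case, where the paper deletes $\ell$ from both cycles and it is less clear that the resulting cycle retains an edge in the unused hyperplane. The bookkeeping you defer does close, by exactly the count the paper performs: when positioning the path in slice $c+1$, only the previously determined gluing lines passing through the free endpoint $Y_c$ forbid positions, at most $c-1\leq q-2$ points, against an $Aut_T$-orbit of size at least $q-1$; the inter-slice lines are automatically distinct from all path edges and from $AB$, since a line joining affine points of two different slices lies in no member of the pencil; and $A,B$ can be chosen last among the $q^{n-1}$ points of $\Sigma\setminus T$, which leaves ample room after excluding the few forbidden intersections. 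So nothing in your outline fails; you have only left unwritten the counting that the paper writes out.
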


\begin{proof}
We will proceed by induction on $n$. The result is true for $n=2$ because of Corollary \ref{cor1}.  Hence, we assume it is true for $n\leq d$. 

For $n=d+1\geq 3$, we use Remark \ref{rem5} to  `partition' $PG(d+1,q)$ into $q+1$ copies of $PG(d, q)$, labelled $\Pi_1, \ldots , \Pi_{q+1}$, that share $\Sigma \cong PG(d-1,q)$. 

If $3\leq k \leq q^{d}+2$ then the induction hypothesis gives us a $k$-cycle in $\Pi_1$ with exactly two points of $\Sigma$ as vertices, and exactly one line of $\Sigma$ as edge.  Since $\Pi_1\cap \Pi_2 = \Sigma$, this cycle can be seen as a $k$-cycle in $PG(n,q)$ with exactly two points of $\Pi_2$ as vertices, and exactly one line of $\Pi_2$ as edge. \\

Let $q^{d}+3\leq k \leq q^{d+1}+2$. We write $k=\alpha q^d +\beta$, where $0\leq \beta < q^d$, and $1 \leq \alpha \leq q$. \\

\noindent \textbf{Case 1. $\beta=0,1,2$:} First notice that $\beta=0,1,2$ forces $\alpha>1$. \\
For every $1\leq i \leq \alpha -1$ we use the induction hypothesis to get a cycle $\Cl_i$ in $\Pi_i$ of length $q^d+2$ with exactly two points of $\Sigma$ as vertices, and exactly one line of $\Sigma$ as edge. Similarly, we get a cycle $\Cl_{\alpha}$ in $\Pi_{\alpha}$ of length $q^d+\beta$   with exactly two points of $\Sigma$ as vertices, and exactly one line of $\Sigma$ as edge. We now use Lemma  \ref{lemPGn-1inPGn}\textbf{(a)} to get all these cycles to share the edge in $\Sigma$ and the two vertices in $\Sigma$. We will call this common edge $\ell$ and the two common vertices $P$ and $Q$. 

If $\alpha = 2$, then we delete $\ell$ from $\Cl_1$ and from $\Cl_2$, and notice that the union of $\Cl_1\setminus \ell$ and $\Cl_2\setminus \ell$ yields a cycle with the desired length and properties. 

If $\alpha >2$ then, for every $1< i < \alpha$, we delete the three edges incident with $P$ and $Q$ in $\Cl_i$. The path left will be denoted $\mathcal{T}_i$, and its endpoints will be labeled $A_i$ and $B_i$. Next we delete the vertex $Q$ with the two edges adjacent to it in $\Cl_1$, the resulting path has endpoints $P$ and $B$. Then we delete the vertex $P$ with the two edges adjacent to it in $\Cl_{\alpha}$, the resulting path has endpoints $A$ and $Q$. 

Now we want to connect $B$ with $A_2$, $B_2$ with $A_3$, $B_3$ with $A_4$, and so on, until connecting $B_{\alpha -1}$ with $A$. When this is done, we would finally connect $P$ and $Q$ using $\ell$ to get a cycle of the desired length and properties. However, we still need to prove that the lines needed to create this cycle, in this last part of the construction, have not been previously used.

Note that the line $\overleftrightarrow{BA_2}$ has not been used so far, as $B\in \Pi_1$ and $A_2\notin\Pi_1$ and all lines previously used connected points on the same $\Pi_i$. A similar argument applies to all other needed lines. However, it is possible that $\overleftrightarrow{B_2A_3} = \overleftrightarrow{BA_2}$. If this is the case then we can use Lemma \ref{lemPGn-1inPGn}\textbf{(b)}  to `move' $A_3$ away from the point $\overleftrightarrow{BA_2}\cap \Pi_2$. By labeling  this new point as $A_3$ we get that $\overleftrightarrow{B_2A_3} \neq \overleftrightarrow{BA_2}$. We perform this process of `moving' and labeling $\alpha - 2$ times, for all points $A_i$, $3\leq i \leq \alpha -1$, and for $A$. In this way we get the lines needed for this construction to be all distinct. Note that when `moving' $A_i$ we have already determined $i-2$ lines, and thus there are at most $i-2$ points that are forbidden for $A_i$ to move to. So, we would need the points to be moved to have at least $\alpha -2\leq q-2$ possible final locations (at least that many elements in its orbit), which is granted by Lemma \ref{lemPGn-1inPGn}\textbf{(b)}.\\

\noindent \textbf{Case 2. $\beta> 2$:} \\ 
This construction is almost the same as that in Case 1. The only difference is that, for all $1\leq i \leq \alpha$, we now consider cycles $\Cl_i$ in $\Pi_i$ of length $q^d+2$ with exactly two points of $\Sigma$ as vertices, and exactly one line of $\Sigma$ as edge, and also one cycle $\Cl_{\alpha+1}$ in $\Pi_{\alpha+1}$ of length $\beta$   with exactly two points of $\Sigma$ as vertices, and exactly one line of $\Sigma$ as edge. 
\end{proof}

Now we are ready to prove our main result.

\begin{proof}[Proof of Theorem 1]
We will prove this by induction on $n$. The case $n=2$ was proved in \cite{LMV13}. Hence, we assume the claim is true for $n\leq d$. \\
Now notice that Lemma \ref{lemshortedcycles} gives us $k$-cycles for most values of $k$. We only need to study the cases $q^{n}+3\leq k \leq q^{n}+q^{n-1}+\cdots + q+1$. 

For $n=d+1\geq 3$, we `partition' $PG(d+1,q)$ into $q+1$ copies of $PG(d, q)$, labelled $\Pi_1, \ldots , \Pi_{q+1}$, that share $\Sigma \cong PG(d-1,q)$. We also write $k=q^{d+1} +\beta$, where $3\leq \beta \leq q^d+q^{d-1}+\cdots +q+1$. Notice that the induction hypothesis yields $\beta$-cycles in $\Pi_{q+1}$, for all $3\leq \beta \leq q^d+q^{d-1}+\cdots +q+1$.

Let $\ell$ be a line of the $\beta$-cycle $\Cl_{q+1}$ in $\Pi_{q+1}$. WLOG, $\ell$ is a line of $\Sigma \cong PG(d-1,q)$. We use Lemma \ref{lemshortedcycles} to get a cycle $\Cl$ with length $q^{d+1} +2$ in $PG(d+1,q)$ with exactly two points of $\Pi_{q+1}$ as vertices, and exactly one line of $\Pi_{q+1}$ as edge. We next use Lemma \ref{lemPGn-1inPGn} to get the edge of $\Cl$ in $\Pi_{q+1}$ to be $\ell$. Finally, we delete the edge $\ell$ from $\Cl$ and from the $\beta$-cycle $\Cl_{q+1}$. The union of the resulting paths is a cycle with the desired length.
\end{proof}







\end{document}